\begin{document}

\parskip0pt
\parindent10pt

\newenvironment{answer}{\color{Blue}}{\color{Black}}
\newenvironment{exercise}
{\color{Blue}\begin{exr}}{\end{exr}\color{Black}}

\theoremstyle{plain} 
\newtheorem{theorem}{Theorem}[section]
\newtheorem*{theorem*}{Theorem}
\newtheorem{prop}[theorem]{Proposition}
\newtheorem{porism}[theorem]{Porism}
\newtheorem{lemma}[theorem]{Lemma}
\newtheorem{cor}[theorem]{Corollary}
\newtheorem{conj}[theorem]{Conjecture}
\newtheorem{funfact}[theorem]{Fun Fact}
\newtheorem*{claim}{Claim}
\newtheorem{question}{Question}
\newtheorem*{conv}{Convention}

\theoremstyle{remark}
\newtheorem{exr}{Exercise}
\newtheorem*{rmk}{Remark}

\theoremstyle{definition}
\newtheorem{defn}{Definition}
\newtheorem{example}{Example}

\renewcommand{\mod}[1]{{\ifmmode\text{\rm\ (mod~$#1$)}\else\discretionary{}{}{\hbox{ }}\rm(mod~$#1$)\fi}}

\newcommand{\ns}{\mathrel{\unlhd}}
\newcommand{\tr}{\text{tr}}
\newcommand{\wt}[1]{\widetilde{#1}}
\newcommand{\wh}[1]{\widehat{#1}}
\newcommand{\cbrt}[1]{\sqrt[3]{#1}}
\newcommand{\floor}[1]{\left\lfloor#1\right\rfloor}
\newcommand{\abs}[1]{\left|#1\right|}
\newcommand{\ds}{\displaystyle}
\newcommand{\nn}{\nonumber}
\newcommand{\re}{\text{Re}}
\renewcommand{\ker}{\textup{ker }}
\renewcommand{\char}{\textup{char }}
\renewcommand{\Im}{\textup{Im }}
\renewcommand{\Re}{\textup{Re }}
\newcommand{\area}{\textup{area }}
\newcommand{\isom}
    {\ds \mathop{\longrightarrow}^{\sim}}
\renewcommand{\ni}{\noindent}
\renewcommand{\bar}{\overline}
\newcommand{\morph}[1]
    {\ds \mathop{\longrightarrow}^{#1}}

\newcommand{\Gal}{\textup{Gal}}
\newcommand{\Aut}{\textup{Aut}}
\newcommand{\Crypt}{\textup{Crypt}}
\newcommand{\disc}{\textup{disc}}
\newcommand{\sgn}{\textup{sgn}}
\newcommand{\del}{\partial}

\newcommand{\mattwo}[4]{
\begin{pmatrix} #1 & #2 \\ #3 & #4 \end{pmatrix}
}

\newcommand{\vtwo}[2]{
\begin{pmatrix} #1 \\ #2 \end{pmatrix}
}
\newcommand{\vthree}[3]{
\begin{pmatrix} #1 \\ #2 \\ #3 \end{pmatrix}
}
\newcommand{\vcol}[3]{
\begin{pmatrix} #1 \\ #2 \\ \vdots \\ #3 \end{pmatrix}
}

\newcommand*\wb[3]{%
  {\fontsize{#1}{#2}\usefont{U}{webo}{xl}{n}#3}}

\newcommand\myasterismi{%
  \par\bigskip\noindent\hfill
  \wb{10}{12}{I}\hfill\null\par\bigskip
}
\newcommand\myasterismii{%
  \par\bigskip\noindent\hfill
  \wb{15}{18}{UV}\hfill\null\par\medskip
}
\newcommand\myasterismiii{%
  \par\bigskip\noindent\hfill
  \wb{15}{18}{z}\hfill\null\par\bigskip
}

\newcommand{\one}{{\rm 1\hspace*{-0.4ex} \rule{0.1ex}{1.52ex}\hspace*{0.2ex}}}

\renewcommand{\v}{\vec{v}}
\newcommand{\w}{\vec{w}}
\newcommand{\e}{\vec{e}}
\newcommand{\m}{\vec{m}}
\renewcommand{\u}{\vec{u}}
\newcommand{\vecx}{\vec{e}_1}
\newcommand{\vecy}{\vec{e}_2}
\newcommand{\vo}{\vec{v}_1}
\newcommand{\vt}{\vec{v}_2}

\renewcommand{\o}{\omega}
\renewcommand{\a}{\alpha}
\renewcommand{\b}{\beta}
\newcommand{\g}{\gamma}
\newcommand{\sig}{\sigma}
\renewcommand{\d}{\delta}
\renewcommand{\t}{\theta}
\renewcommand{\k}{\kappa}
\newcommand{\ve}{\varepsilon}
\newcommand{\op}{\text{op}}

\newcommand{\Z}{\mathbb Z}
\newcommand{\ZN}{\Z_N}
\newcommand{\Q}{\mathbb Q}
\newcommand{\N}{\mathbb N}
\newcommand{\R}{\mathbb R}
\newcommand{\C}{\mathbb C}
\newcommand{\F}{\mathbb F}
\newcommand{\T}{\mathbb T}
\renewcommand{\H}{\mathbb H}
\newcommand{\B}{\mathcal B}
\newcommand{\p}{\mathcal P}
\renewcommand{\P}{\mathbb P}
\renewcommand{\r}{\mathcal R}
\renewcommand{\c}{\mathcal C}
\newcommand{\h}{\mathcal H}
\newcommand{\f}{\mathcal F}
\newcommand{\s}{\mathcal S}
\renewcommand{\L}{\mathcal L}
\newcommand{\lam}{\lambda}
\newcommand{\E}{\mathcal E}
\newcommand{\Ex}{\mathbb E}
\newcommand{\D}{\mathbb D}
\newcommand{\oh}{\mathcal O}
\newcommand{\n}{\mathcal N}
\newcommand{\I}{\mathcal I}
\newcommand{\G}{\mathcal G}

\newcommand{\diam}{\text{ diam}}
\newcommand{\vol}{\text{vol}}
\newcommand{\Int}{\text{Int}}

\newcommand{\Span}{\text{span}}
\newcommand{\AP}{\text{AP}}

\newcommand{\MoM}{\text{MoM}}

\newcommand{\0}{{\vec 0}}

\newcommand{\ignore}[1]{}

\newcommand{\poly}[1]{\textup{Poly}_{#1}}

\newcommand*\circled[1]{\tikz[baseline=(char.base)]{
            \node[shape=circle,draw,inner sep=2pt] (char) {#1};}}

\newcommand*\squared[1]{\tikz[baseline=(char.base)]{
            \node[shape=rectangle,draw,inner sep=2pt] (char) {#1};}}

\title{Correlations of the Riemann zeta function} 

\author{Michael J. Curran }
\email{Michael.Curran@maths.ox.ac.uk}
\address{Mathematical Institute, University of Oxford, Oxford, OX2 6GG, United Kingdom.}

\maketitle

\begin{abstract}
Assuming the Riemann hypothesis, we investigate the shifted moments of the zeta function
\[
M_{\bm{\a},\bm{\b}}(T) = \int_T^{2T}  \prod_{k = 1}^m |\zeta(\tfrac{1}{2} + i (t + \a_k))|^{2 \b_k} dt
\]
introduced by Chandee \cite{Chandee}, where $\bm{\a} = \bm{\a}(T) =  (\a_1, \ldots, \a_m)$ and $\bm{\b} = (\b_1 \ldots , \b_m)$ satisfy $|\a_k| \leq T/2$ and $\b_k\geq 0$.
We shall prove that
\[
M_{\bm{\a},\bm{\b}}(T)  \ll_{\bm{\b}} T (\log T)^{\b_1^2 + \cdots + \b_m^2} \prod_{1\leq j < k \leq m} |\zeta(1 + i(\a_j - \a_k) + 1/ \log T )|^{2\b_j \b_k}.
\]
This improves upon the previous best known bounds due to Chandee \cite{Chandee} and Ng, Shen, and Wong \cite{NSW}, particularly when the differences $|\a_j - \a_k|$ are unbounded as $T\rightarrow \infty$.
The key insight is to combine work of Heap, Radziwiłł, and Soundararajan \cite{HRS} and work of the author \cite{CurranMoM} with the work of Harper \cite{Harper} on the moments of the zeta function.

\end{abstract}

\section{Introduction}\label{sec:Intro}

This paper is concerned with the shifted moments 

\begin{equation}\label{eqn:shiftedMoments}
M_{\bm{\a},\bm{\b}}(T) = \int_T^{2T}  \prod_{k = 1}^m |\zeta(\tfrac{1}{2} + i (t + \a_k))|^{2 \b_k} dt, 
\end{equation}
where $\bm{\a} = \bm{\a}(T) =  (\a_1, \ldots, \a_m)$ and $\bm{\b} = (\b_1 \ldots , \b_m)$ satisfy $|\a_k| \leq T/2$ and $\b_k\geq 0$.
These were first studied in general by Chandee \cite{Chandee}, who gave lower bounds assuming the $\b_k$ are integers, $\a_k= O(\log \log T)$, and $|\a_j - \a_k| = O(1)$. Chandee also gave upper bounds assuming the Riemann hypothesis when $|\a_j - \a_k| = O(1)$ and $\a_k= O(\log T)$ which are sharp up to a $(\log T)^\ve$ loss.
Subsequently Ng, Shen, and Wong \cite{NSW} removed the $(\log T)^\ve$ loss in the special case where $\bm{\b} = (\b,\b)$  by using the work of Harper \cite{Harper} on the moments of the zeta, and they also gave bounds in the larger regime $|\a_1 + \a_2| \leq T^{0.6}$.
More precisely, in this range they proved 
\[
M_{(\a_1, \a_2),(\b,\b)}(T)  \ll T (\log T)^{2\b^2} F(\a_1,\a_2, T)^{2\b^2}
\]
where 
\[
F(\a_1,\a_2, T) = 
\begin{cases}
\min\left(|\a_1 - \a_2|^{-1}, \log T\right) & |\a_1 - \a_2| \leq 1/100\\
\log(2 + |\a_1 - \a_2|) & |\a_1 - \a_2| > 1/100 
\end{cases}.
\]

Some special cases of the shifted moments $M_{\bm{\a},\bm{\b}}(T)$ and related objects have been studied unconditionally. 
For example the integral
\[
\int_T^{2T} \zeta (\tfrac{1}{2}+ i (t + \a_1))\zeta (\tfrac{1}{2}- i (t + \a_2)) dt
\]
akin to $M_{\bm{\a},\bm{\b}}(T)$  with $\bm{\b} = (\tfrac{1}{2},\tfrac{1}{2})$ is fairly well understood.
Here the current state of the art comes from Atkinson's formula for the mean square of zeta \cite{Atkinson} and Bettin's work on the second moment of zeta with  shifts of size $T^{2 - \ve}$ \cite{BettinUS}. 
The current state of the art for $M_{\bm{\a},\bm{\b}}(T)$ with $\bm{\b} = (1,1)$ is due to Motohashi's  explicit formula for the fourth moment of zeta \cite{Motohashi4th, MotohashiSpec} and Kovaleva's work on the fourth moment of zeta with shifts of size up to $T^{3/2 - \ve}$ \cite{Kovaleva}.
Finally in the case where $\bm{\b} = (\b,\b)$, sharp upper bounds for $\b \leq 1$ and lower bounds for all $\b \geq 0$ with shifts of size up to $T^{1/2-\ve}$ were obtained by the author \cite{CurranMoM}.
The goal of this paper is, assuming the Riemann hypothesis, to extend the work of Ng, Shen, and Wong \cite{NSW} to arbitrary $\bm{\a}$ and $\bm{\b}$ and to give stronger bounds in the regime where the differences $|\a_j - \a_k|$ are unbounded.

\begin{theorem}\label{thm:main}
Assume the Riemann hypothesis. If $\b_k \geq 0$ and $|\a_k| \leq T/2$ for $k = 1, \ldots , m,$ then
\[
M_{\bm{\a},\bm{\b}}(T) \ll_{\bm{\b}} T (\log T)^{\b_1^2 + \cdots + \b_m^2} \prod_{1\leq j < k \leq m} |\zeta(1 + i(\a_j - \a_k) + 1/ \log T )|^{2\b_j \b_k}.
\]
\end{theorem}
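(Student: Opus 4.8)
The plan is to follow the now-standard "Heap--Radziwiłł--Soundararajan" approach to sharp upper bounds for moments, as in \cite{HRS}, \cite{Harper}, \cite{Curran}, adapted to the shifted setting. By H\"older's inequality it suffices to treat the case where each $2\b_k$ is replaced by a value we are free to choose; more importantly, we may assume all $\b_k$ are bounded, and we reduce to bounding
\[
\int_T^{2T} \prod_{k=1}^m |\zeta(\tfrac12 + i(t+\a_k))|^{2\b_k}\, dt
\]
by splitting the range of $t$ according to the size of certain Dirichlet polynomial approximations to $\log|\zeta|$. Concretely, set $\theta_0 = 0 < \theta_1 < \cdots < \theta_\nu$ with $\theta_j$ roughly $e^{-j}$ down to a small power of $\log T$, and for each $j$ let $P_j(t) = \sum_{T^{\theta_{j-1}} < p \le T^{\theta_j}} p^{-1/2 - it}$ be the corresponding short prime sum. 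On RH one has the pointwise bound, valid for most $t$,
\[
\log|\zeta(\tfrac12 + i(t+\a_k))| \le \Re \sum_{j} P_j(t+\a_k) \frac{\log(T^{\theta_j}/p)}{\log T^{\theta_j}} + \text{small},
\]
which is the main analytic input (a Selberg-type / Soundararajan-type inequality). The key point that makes the shifts tractable is that the exponential moments of these prime sums factor: for shifts $\a_j - \a_k$, the relevant covariance is governed by $\sum_{p \le T^{\theta}} \frac{\cos((\a_j-\a_k)\log p)}{p}$, which is exactly $\Re\log\zeta(1 + i(\a_j-\a_k) + \tfrac1{\log T}) + O(1)$ — this is where the predicted $|\zeta(1 + i(\a_j-\a_k)+1/\log T)|^{2\b_j\b_k}$ factors appear.

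The steps, in order, would be: (1) Reduce to bounded $\b_k$ via H\"older and positivity, and reduce to counting measure of level sets of the prime sums — i.e., bound $M_{\bm\a,\bm\b}(T)$ by a sum over "cases" indexed by the first scale $j$ at which some $|P_j(t+\a_k)|$ is abnormally large, plus a main term where all prime sums behave. (2) On the main-term set, replace $\prod_k|\zeta(\tfrac12+i(t+\a_k))|^{2\b_k}$ by $\prod_k |\mathcal{N}_k(t)|^2$ where $\mathcal{N}_k(t) = \prod_j \exp(\b_k P_j(t+\a_k) \cdot (\text{smoothing}))$ truncated suitably (the Radziwiłł--Soundararajan device: $e^x \le (1+x+\cdots)(\text{something})$ or the "$\exp \le$ polynomial times short Euler product" trick), so that everything becomes a Dirichlet polynomial of length $<T$. (3) Expand the square and integrate term by term using the mean value theorem for Dirichlet polynomials, $\int_T^{2T}|\sum_{n\le N} a_n n^{-it}|^2 dt = (T + O(N))\sum |a_n|^2$; the diagonal $n = n'$ gives, after resumming the prime-power contributions, a product of local factors
\[
\prod_k (\log T)^{\b_k^2} \prod_{j<k} \exp\!\Big(2\b_j\b_k \sum_{p} \frac{\cos((\a_j-\a_k)\log p)}{p}\Big) \ll (\log T)^{\sum\b_k^2}\prod_{j<k}|\zeta(1+i(\a_j-\a_k)+1/\log T)|^{2\b_j\b_k}.
\]
(4) Control the "bad" cases: for each $j$, the set where $|P_j(t+\a_k)|$ is large has measure exponentially small in a power of $1/\theta_j$, small enough to beat the trivial bound $|\zeta|^{A}$ on that set (here one uses that on RH, or even just a zero-density input, $|\zeta(\tfrac12+it)|$ is not too large too often, together with Harper's more delicate treatment of the last, longest scale to avoid losing a power of $\log\log T$).

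The main obstacle I expect is \textbf{step (4) combined with the interaction between different shifts at the short scales}. In the unshifted case $\a_k$ all equal, the prime sums $P_j(t+\a_k)$ at a fixed scale are the \emph{same} random-like object, and Harper's argument controls their joint large deviations; with distinct shifts one has $m$ correlated copies $P_j(t+\a_1),\ldots,P_j(t+\a_m)$ whose correlation is precisely the factor $\sum_{p} p^{-1}\cos((\a_j-\a_k)\log p)$ we are trying to produce, and one must run Harper's multiscale/"typical size" argument so that these correlations are accounted for at every scale rather than only in the main term. When the $|\a_j - \a_k|$ are large (the regime where the theorem is new), the prime sums decouple at the short scales where $\log p \lesssim 1/|\a_j-\a_k|$ fails, and one gains; when they are small they behave like a single sum; the bookkeeping that interpolates between these — ensuring the measure bounds on the bad sets carry the correct $\zeta(1+i(\a_j-\a_k)+\cdots)$ weights uniformly — is the technical heart. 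The secondary obstacle is purely combinatorial: organizing H\"older's inequality and the choice of exponents so the $m$-fold product is handled with constants depending only on $\bm\b$ (not on $m$ in a way that blows up), which is where the cleaner formulation of \cite{Curran} and the structure of \cite{HRS} should be invoked rather than reproved.
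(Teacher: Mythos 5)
Your overall plan matches the paper's: apply the Soundararajan/Harper pointwise bound for $\log|\zeta|$, decompose the primes into multiscale blocks, replace $\exp(\b P_j)$ by a truncated Taylor polynomial in the style of Heap--Radziwi\l\l--Soundararajan, evaluate the resulting short Dirichlet polynomial moments with Montgomery--Vaughan, and recognize that the covariance identity
\[
\sum_{p\le X}\frac{\cos(\delta\log p)}{p} = \log|\zeta(1+1/\log X + i\delta)| + O(1)
\]
is the source of the $|\zeta(1+i(\a_j-\a_k)+1/\log T)|^{2\b_j\b_k}$ factors. These are exactly Lemmas \ref{lem:logZetaUpperBound}, \ref{lem:expTaylorSeries}, \ref{lem:MVDP}, \ref{lem:Splitting}, and \ref{lem:cosPrimeSum}.

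Where the proposal diverges is your step (4). You propose to treat the bad sets in Harper's style, running his ``typical size'' multiscale argument and tracking the correlations between the $m$ shifted copies of each prime sum at every scale. That is what Ng--Shen--Wong did, and --- as you yourself anticipate --- it becomes the bottleneck when the shifts are arbitrary: Harper's method ultimately asks you to estimate integrals of products of shifted cosines uniformly in the shifts, which is precisely why that approach was confined to two equal exponents and a restricted shift regime. The paper's key observation, absent from your proposal, is that on the bad sets one should \emph{not} try to carry the $\zeta$-correlation weights at all. The measure of $\B_j$ is super-polynomially small in $\log T$ (of order $T\exp(-c e^{-j}(\log_2 T)^2)$, cf.\ Lemma \ref{lem:measB1}), and since $|\zeta(1+i\delta+1/\log T)| \gg 1/\log T$ the main term on the right of Theorem \ref{thm:main} is bounded below by a fixed negative power of $\log T$; so even a crude bound on each $\B_j$ that loses a bounded power of $\log T$ is acceptable. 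Concretely, for $\B_1$ one applies Cauchy--Schwarz with Harper's unshifted moment bound, and for $\B_j$ with $j\ge2$ one multiplies the integrand by $(|\p_{j,T_s}|/K_j)^{2\lceil K_j^2/2\rceil}\ge1$, extends the integral to all of $[T,2T]$, and again uses the mean value theorem and the splitting Lemma \ref{lem:Splitting} --- no shifted-cosine integrals, no correlation bookkeeping. The $\zeta$-factors need only be produced on the good set $\G_{[m]}$, where the HRS truncated exponential reduces everything to a single clean Dirichlet polynomial mean value.

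Two smaller remarks: the opening ``reduce to bounded $\b_k$ via H\"older'' step is not needed (the $\b_k$ are fixed constants, and the implied constants are allowed to depend on $\bm\b$ from the outset); and your proposal omits the prime-square term $\sum_p \tfrac12 p^{-1-2it}$ from Lemma \ref{lem:logZetaUpperBound}, which requires its own (easier) good/bad decomposition into the sets $\c_l$ via Lemma \ref{lem:measCl} before the HRS machinery can be applied.
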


\begin{rmk}
Throughout this paper we will assume $T$ is sufficiently large in terms of $\bm{\b}$.  
\end{rmk}

Our bound is the same order of magnitude predicted by the famous recipe of Conrey, Farmer, Keating, Rubinstein, and Snaith \cite{CFKRS}.
We obtain lower bounds of the same order in a subsequent paper \cite{CurranCLB}, so the bound is sharp.
At heart, Theorem \ref{thm:main} is a statement about how $\zeta(\tfrac{1}{2} + i t)$ and $\zeta(\tfrac{1}{2} + i (t + \a))$ are correlated for $t \in  [T,2T]$ and $|\a|\leq T/2$.
More precisely, it predicts that $\zeta(\tfrac{1}{2} + i t)$ and $\zeta(\tfrac{1}{2} + i (t + \a))$ are perfectly correlated on average when $|\a| \leq 1 / \log T$, and decorrelate like $|\zeta(1 + i \a)|$ for $|\a| > 1 / \log T$. When $\a \leq 1$, the Laurent expansion for zeta shows that we obtain the same correlations predicted from random matrix theory. 
For larger $\a$, the correlation is of order 1 on average, which can be seen by calculating the moments of zeta to the right of the 1-line.
There are, however, long range correlations coming from the primes; more precisely, from the extreme values of zeta on the one line.
This is not so surprising, for the Keating Snaith philosophy only predicts that random matrix theory is a good model for $\zeta(\tfrac{1}{2}+ i t)$ in short intervals.

The starting point of the proof is to use the method of Soundararajan \cite{SRHMoments} and Harper \cite{Harper} to bound $\log |\zeta(\tfrac{1}{2} + i t)|$ by a short Dirichlet polynomial.
Instead of following the argument of Harper, however, we treat the exponential of this short Dirichlet polynomial in a manner similar to the approach taken in the work of Heap, Radziwiłł, and Soundararajan \cite{HRS}.
Using this method, the integrals that arise can be evaluated by simply using the mean value theorem for Dirichlet polynomials. 
These mean values are much easier to evaluate uniformly in the shifts $\a_k$ than the integrals of products of shifted cosines that appear when using Harper's method \cite{Harper} as Ng, Shen, and Wong  do in \cite{NSW}.
This difference allows us to obtain upper bounds for general shifts $\bm{\a}$ and exponents $\bm{\b}$. 
The final ingredient is a more precise estimate of the following sum
\[
\sum_{p\leq X} \frac{\cos(\d \log p)}{p} 
\]
coming from the theory of pretentious multiplicative functions, see Lemma \ref{lem:cosPrimeSum}.
This idea appeared in the author's previous work on studying the second moment of moments of zeta in short intervals \cite{CurranMoM}.
This more precise estimate is what allows us to improve the bound of Ng, Shen, and Wong \cite{NSW} in the regime where $|\a_j - \a_k|$ is unbounded.

\section*{Acknowledgements}
\noindent
The author would like to thank Paul Bourgade, Hung Bui, and Maksym Radziwiłł for bringing the author's attention to the works of Chandee \cite{Chandee} and Ng, Shen, and Wong \cite{NSW}. The author would also like to thank his advisor Jon Keating, Valeriya Kovaleva, and Maksym Radziwiłł for useful comments.

\section{Preliminary tools and notation}\label{sec:Notation}

We will start by using the following lemma, which is due to Soundararajan \cite{SRHMoments} and Harper \cite{Harper}.

\begin{lemma}\label{lem:logZetaUpperBound}
Assume the Riemann hypothesis, let $t\in [T,2T]$, and $|\a| \leq T/2$. Then for $2\leq X \leq T^2$ 
\begin{align*}
\log|\zeta(\tfrac{1}{2} &+ i (t + \a))|\leq \Re \sum_{p \leq X} \frac{1}{p^{1/2 + 1/\log X+ i(t + \a)}} \frac{\log X/p}{\log X}  
\\ + &\sum_{p\leq \min(\sqrt{X}, \log T)} \frac{1}{2 p^{1 + 2 i (t +\a)}} + \frac{\log T}{\log X} + O(1).
\end{align*}
\end{lemma}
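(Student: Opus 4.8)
The plan is to derive this from the method of Soundararajan \cite{SRHMoments}, in the sharp form due to Harper \cite{Harper}. Since $|\a| \le T/2$ we have $u := t + \a \in [T/2, 5T/2]$ and $\log u = \log T + O(1)$, while $X \le T^2 \le 4u^2$; thus it suffices to prove the bound for $\log|\zeta(\tfrac12 + iu)|$ with $u$ in this range, the replacement of $\log u$ by $\log T$ throughout costing only $O(1)$. So I write $\sigma_0 := \tfrac12 + \tfrac1{\log X}$ and henceforth suppress the shift.

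First I would pass from the critical line to the line $\Re s = \sigma_0$. From the Hadamard product for $\xi$ (equivalently, the partial fraction expansion of $\zeta'/\zeta$), under RH one has, for $\tfrac12 \le \sigma \le 1$,
\[
\Re\frac{\zeta'}{\zeta}(\sigma + iu) = \sum_{\rho}\frac{\sigma - \tfrac12}{(\sigma - \tfrac12)^2 + (u - \gamma)^2} + O(\log u),
\]
where $\rho = \tfrac12 + i\gamma$ runs over nontrivial zeros and the displayed sum is $\ge 0$. Since $\log|\zeta(\sigma_0 + iu)| - \log|\zeta(\tfrac12 + iu)| = \int_{1/2}^{\sigma_0}\Re\frac{\zeta'}{\zeta}(\sigma + iu)\,d\sigma$, integrating this identity over $\sigma \in [\tfrac12, \sigma_0]$ and keeping the favorable sign of the zero sum gives
\[
\log\left|\zeta\!\left(\tfrac12 + iu\right)\right| \le \log\left|\zeta(\sigma_0 + iu)\right| + O\!\left(\tfrac{\log u}{\log X}\right).
\]

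Next I would expand $\log|\zeta(\sigma_0 + iu)|$ by a smoothed explicit formula. Writing $\log\zeta(\sigma_0 + iu) = -\int_{\sigma_0}^{\infty}\frac{\zeta'}{\zeta}(\sigma + iu)\,d\sigma$ and inserting Soundararajan's identity — obtained by moving the contour in $\frac1{2\pi i}\int_{(c)}\bigl(-\tfrac{\zeta'}{\zeta}\bigr)(s + w)\,K_X(w)\,dw$ for a kernel $K_X$ with a simple pole of residue $1$ at $w = 0$ whose associated weights are the triangular weights $\tfrac{\log(X/n)}{\log X}$ on $n \le X$ — one arrives at
\[
\log\left|\zeta(\sigma_0 + iu)\right| = \Re\sum_{n \le X}\frac{\Lambda(n)}{n^{\sigma_0 + iu}\log n}\cdot\frac{\log(X/n)}{\log X} + \mathcal{E}(u),
\]
where $\mathcal{E}(u)$ collects the residue at $s = 1$ — which is $O(1)$ since $u$ is large — together with the residues at the zeros. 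Bounding this last, zero-theoretic contribution by $O(\tfrac{\log T}{\log X})$, rather than the trivial $O(\log T)$, is the step I expect to be the main obstacle: one must exploit the oscillation $e^{i(\gamma - u)\log X}$ carried by the zero terms, so that the sum over $\gamma$ exhibits cancellation, used in conjunction with the zero-counting estimate $\#\{\gamma : |\gamma - u| \le h\} \ll (h + 1)\log u$. This is precisely where Harper's dyadic refinement of Soundararajan's original argument is needed.

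Finally I would separate the Dirichlet polynomial into prime powers. The contribution of $n = p$ is $\Re\sum_{p \le X}\tfrac1{p^{\sigma_0 + iu}}\tfrac{\log(X/p)}{\log X}$, the main term. The contribution of $n = p^2$ is $\Re\sum_{p \le \sqrt X}\tfrac1{2p^{2\sigma_0 + 2iu}}\tfrac{\log(X/p^2)}{\log X}$; using Mertens' estimate $\sum_{p \le y}\tfrac{\log p}{p} = \log y + O(1)$ one finds that replacing $p^{2/\log X}$ by $1$, discarding the weight $\tfrac{\log(X/p^2)}{\log X}$, and truncating at $\min(\sqrt X, \log T)$ each alter the sum by an admissible amount, leaving $\Re\sum_{p \le \min(\sqrt X, \log T)}\tfrac1{2p^{1 + 2iu}}$. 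The higher prime powers $n = p^k$ with $k \ge 3$ contribute $\ll \sum_p p^{-3/2} \ll 1$. Collecting the main term, the square term, the zero contribution, and all the $O(1)$ errors, and reinstating $t + \a$ in place of $u$, yields the lemma.
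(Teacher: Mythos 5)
The paper does not prove this lemma; it is cited as due to Soundararajan \cite{SRHMoments} (his Main Proposition) and Harper \cite{Harper} (his Lemma~1, which is the prime-power separated form). Your attempt has the correct outer skeleton (pass from $\tfrac12$ to $\sigma_0$ using positivity, expand $\log|\zeta(\sigma_0+iu)|$ by a smoothed explicit formula, separate prime powers), and your handling of the prime-square truncation and the $p^k$, $k\ge3$, tail is right. But the central step is both misattributed and left as a gap.

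You claim the zero contribution in the explicit formula must be shown to be $O(\log T/\log X)$ by exploiting the oscillation $e^{i(\gamma-u)\log X}$ together with a zero-counting estimate, and that ``Harper's dyadic refinement'' is what accomplishes this. Neither half of this is correct. In Soundararajan's proof there is \emph{no} cancellation over the ordinates $\gamma$. Setting $\sigma_0=\tfrac12+\lambda/\log X$ and
\[
F(\sigma_0+iu):=\sum_{\gamma}\frac{\sigma_0-\tfrac12}{(\sigma_0-\tfrac12)^2+(u-\gamma)^2}\ \ge 0,
\]
the quantity $F$ enters \emph{twice}: once, with a favourable sign, when you integrate $\Re\frac{\zeta'}{\zeta}$ from $\tfrac12$ to $\sigma_0$ (the step you did carry out), and again, with an unfavourable sign, when you bound the residues of the zeros in the smoothed explicit formula at $\sigma_0$ (using only the trivial bound $|x^{\rho-s}/(\rho-s)^2|\le x^{1/2-\sigma}/((\sigma-\tfrac12)^2+(u-\gamma)^2)$). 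The net coefficient of $F$ is $\frac{1}{\log X}\bigl(\frac{e^{-\lambda}}{\lambda}-1-\frac{\lambda}{2}\bigr)$, which is $\le 0$ precisely when $\lambda\ge\lambda_0\approx0.4912$; in particular $\lambda=1$ works, $F$ is discarded by positivity, and the surviving explicit terms give exactly the $\frac{(1+\lambda)}{2}\frac{\log T}{\log X}=\frac{\log T}{\log X}$ error. That balancing act is the whole content of Soundararajan's argument, and it is what your write-up is missing. Note also that your first step as stated loses information: you kept only $\log|\zeta(\tfrac12+iu)|\le\log|\zeta(\sigma_0+iu)|+O(\log u/\log X)$ and dropped the $-\frac{\lambda}{2\log X}F$ term; without retaining that term you have nothing to cancel the zero residues against, and the bound does \emph{not} close by oscillation. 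Finally, Harper's novelty in \cite{Harper} is the moment machinery downstream of this lemma (the good/bad set decomposition over dyadic ranges of primes); his Lemma~1 follows from Soundararajan's Main Proposition essentially by the prime-power bookkeeping you already did correctly.
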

\noindent
Throughout it will be useful to break the set of primes into certain intervals.
Set
\[
\b_\ast := \sum_{k\leq m} \max(1, \b_k).
\]
Throughout this paper we will use $\log_j$ to denote the $j$-fold iterated logarithm.
We choose a sequence of parameters $T_j = T^{c_j}$, where
\[
c_0 = 0 \text{ and } c_j = \frac{e^{j}}{(\log_2 T)^2}
\]
for $j > 0$.
We will choose $L$ to be the largest integer such that $T_L \leq T^{e^{-1000\b_\ast}}$.
Let 
\[
\p_{1,X}(s) = \sum_{p \leq T_1} \frac{1}{p^{s+1/\log X}} \frac{\log X/p}{\log X} + \sum_{p\leq \log T} \frac{1}{2p^{2s}},
\]
and given any $2\leq j \leq L$ define
\[
\p_{j,X}(s) = \sum_{p\in (T_{j-1},T_j]} \frac{1}{p^{s+1/\log X}} \frac{\log X/p}{\log X}. 
\]
If $\p_{j,X}(s)$ is not too large, then we will be able to efficiently approximate $\exp(\b \p_{j,X} (s))$ with its Taylor series.
Indeed, if we  choose cutoff parameters $K_j = c_j^{-3/4}$ for $j \geq 1$ and set
\[\label{eq:TaylorExp}
\n_{j,X}(s;\b) := \sum_{m \leq  100\b_\ast^2 K_j} \frac{\b^m \p_{j,X}(s)^m}{m!}
\]
then we have the following analog of lemma 1 of \cite{HRS}: 

\begin{lemma}\label{lem:expTaylorSeries}
If $\b \leq \b_\ast$ and $|\p_{j,X}(s)| \leq K_j$ for some $1\leq j \leq L$, then
\[
\exp(2\b \Re \p_{j,X}(s)) \leq (1+ e^{-100\b_\ast^2 K_j})^{-1} |\n_{j,X}(s;\b)|^2.
\]
\end{lemma}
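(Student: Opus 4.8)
The plan is to regard $\n_{I_j,X}(s;\b)$ as the truncation of the Taylor series of $\exp(\b\p_{I_j,X}(s))$ and to show that, under the hypothesis $|\p_{I_j,X}(s)|\le 2K_j$, the discarded tail is exponentially small in $K_j$ relative to $|\exp(\b\p_{I_j,X}(s))|$. Since $\exp(2\b\Re\p_{I_j,X}(s)) = |\exp(\b\p_{I_j,X}(s))|^2$ (as $\b$ is real), the stated comparison with $|\n_{I_j,X}(s;\b)|^2$ then follows at once. So the first step is to put
\[
R := \exp(\b\p_{I_j,X}(s)) - \n_{I_j,X}(s;\b) = \sum_{n > 20\b_\ast K_j}\frac{\b^n\p_{I_j,X}(s)^n}{n!}
\]
and estimate $|R|$. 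Because $\b\le\b_\ast$ and $|\p_{I_j,X}(s)|\le 2K_j$ we have $|\b\p_{I_j,X}(s)|\le 2\b_\ast K_j$, which lies a factor of ten below the truncation level $20\b_\ast K_j$, so the tail is dominated by its first term and decays geometrically. Using $n!\ge (n/e)^n$, for $n > 20\b_\ast K_j$ one gets $|\b\p_{I_j,X}(s)|^n/n! \le (2e\b_\ast K_j/n)^n \le (e/10)^n$, and summing the resulting geometric series gives $|R|\ll (e/10)^{20\b_\ast K_j}$; since $\log(10/e) > 1$, this is at most $\exp(-20\b_\ast K_j)$ once $T$ (hence $K_j$) is large.

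The second step turns this additive bound into the required multiplicative one. From $\b\Re\p_{I_j,X}(s)\ge -|\b\p_{I_j,X}(s)|\ge -2\b_\ast K_j$ we obtain $|\exp(\b\p_{I_j,X}(s))| = \exp(\b\Re\p_{I_j,X}(s)) \ge \exp(-2\b_\ast K_j)$, hence $|R| \le \exp(-c\,\b_\ast K_j)\,|\exp(\b\p_{I_j,X}(s))|$ for some absolute $c>0$. Writing $\n_{I_j,X}(s;\b) = \exp(\b\p_{I_j,X}(s)) - R$ and applying the triangle inequality gives $|\n_{I_j,X}(s;\b)| \ge |\exp(\b\p_{I_j,X}(s))|\bigl(1 - \exp(-c\,\b_\ast K_j)\bigr)$; squaring and rearranging then yields $\exp(2\b\Re\p_{I_j,X}(s)) \le |\n_{I_j,X}(s;\b)|^2\bigl(1 - \exp(-c\,\b_\ast K_j)\bigr)^{-2}$. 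Finally, since $L$ was chosen so that $T_L\le T^{1/200\b_\ast^2}$, a short computation with $T_j = T^{e^{j-1}/(\log_2 T)^2}$ and $K_j = (\log_2 T)^{3/2}e^{-j/2}$ shows $K_j\ge K_L\gg \b_\ast(\log_2 T)^{1/2}\to\infty$, so for $T$ large the factor $\bigl(1 - \exp(-c\,\b_\ast K_j)\bigr)^{-2}$ is at most $1 + \exp(-10K_j\b_\ast)$, which gives the claimed bound.

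I do not expect a genuine obstacle here: this is the analogue in the present setup of Lemma 1 of \cite{HRS}, and its proof is essentially bookkeeping. The one point deserving care is making the numerics line up — one has to check that the factor-of-ten gap between $|\b\p_{I_j,X}(s)|$ and the cutoff $20\b_\ast K_j$ produces enough decay in $|R|$ to absorb both the lower bound $\exp(-2\b_\ast K_j)$ for $|\exp(\b\p_{I_j,X}(s))|$ and the fact that $T$ is only assumed large rather than infinite. This is exactly why the truncation in the definition of $\n_{I_j,X}(s;\b)$ is placed at $20\b_\ast K_j$; any fixed multiple of $\b_\ast K_j$ comfortably exceeding $2e\,\b_\ast K_j$ would serve equally well.
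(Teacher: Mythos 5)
Your proof is correct and follows essentially the same route as the paper: bound the Taylor-series tail of $\exp(\b\p_{I_j,X}(s))$, use $|\exp(\b\p_{I_j,X}(s))| \ge e^{-2\b_\ast K_j}$ to turn that additive bound into a multiplicative one, and conclude. You spell out the tail estimate via $n! \ge (n/e)^n$ whereas the paper leaves it at ``series expansion gives,'' but the content is identical.

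One point of hygiene. What you actually derive is $\exp(2\b\Re\p_{I_j,X}(s)) \le \bigl(1 + e^{-10K_j\b_\ast}\bigr)\,|\n_{I_j,X}(s;\b)|^2$, yet the lemma as printed has $\bigl(1 + e^{-10K_j\b_\ast}\bigr)^{-1}$ on the right, and your last sentence asserts the two agree. They do not: the printed factor is $<1$, the one you obtain is $>1$. In fact the printed version cannot be correct --- for instance if $I_j$ is empty (allowed by $I_j\subset(T_{j-1},T_j]$) one has $\p_{I_j,X}(s)=0$ and $\n_{I_j,X}(s;\b)=1$, so the left side equals $1$ while the claimed right side is strictly less than $1$. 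The lemma should read $(1 + e^{-10K_j\b_\ast})$ or equivalently $(1 - e^{-10K_j\b_\ast})^{-1}$, and that is what your argument establishes; this is also what is needed downstream, where the point is that $\prod_{j\le L}\bigl(1 + e^{-10K_j\b_\ast}\bigr)$ is a bounded product. So your mathematics is sound and proves the intended (corrected) statement, but the closing ``which gives the claimed bound'' elides a sign that the paper itself gets wrong.
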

\begin{proof}
Since $|\p_{j,X}(s)| \leq 2 K_j$, Taylor expansion gives
\[
|\exp(\b \p_{j,X}(s))| -  e^{-100\b_\ast^2 K_j} \leq |\n_{j,X}(s;\b)|.
\]
By assumption $\exp(-2 K_j \b_\ast) \leq |\exp(\b \p_{j,X}(s))| \leq \exp(2 K_j \b_\ast)$, so the claim readily follows.
\end{proof}

We will first bound the shifted moment of zeta when all of the shifts $t + \a_k$ lie in the ``good" set
\begin{equation}
\G := \left\{t\in[T/2,5T/2]: |P_{j,T_L} (\tfrac{1}{2} + i t)| \leq K_j \text{ for all } 1\leq j\leq L\right\}.
\end{equation}
In this case we may use Lemma \ref{lem:logZetaUpperBound} with $X = T_L$ in tandem with Lemma \ref{lem:expTaylorSeries} to reduce the problem to computing the mean value of certain Dirichlet polynomial.
We will accomplish this with the following mean value theorem of Montgomery and Vaughan (see for example theorem 9.1 of \cite{IK}).
\begin{lemma}\label{lem:MVDP}
Given any complex numbers $a_n$
\[
\int_{T}^{2T} \left|\sum_{n\leq N} \frac{a_n}{n^{i t}}\right|^2 dt = (T + O(N)) \sum_{n\leq N} |a_n|^2.
\]
\end{lemma}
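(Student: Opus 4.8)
The plan is to expand the square and split the integral into its diagonal and off-diagonal contributions. Writing
\[
\left|\sum_{n\le N}\frac{a_n}{n^{it}}\right|^2 = \sum_{m,n\le N} a_m\overline{a_n}\Big(\frac{n}{m}\Big)^{it}
\]
and integrating term by term over $[T,2T]$, the diagonal terms $m=n$ contribute exactly $T\sum_{n\le N}|a_n|^2$, which is the main term. It therefore remains to show that the off-diagonal part is $O\big(N\sum_{n\le N}|a_n|^2\big)$.

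For the off-diagonal terms I would first evaluate the elementary oscillatory integral: for $m\ne n$,
\[
\int_T^{2T}\Big(\frac{n}{m}\Big)^{it}\,dt = \frac{(n/m)^{2iT}-(n/m)^{iT}}{i\log(n/m)},
\]
which has modulus at most $2/|\log(m/n)|$. Hence the off-diagonal contribution is bounded in absolute value by $2\sum_{m\ne n}|a_m||a_n|\,/\,|\log(m/n)|$, and the task reduces to an arithmetic inequality for this bilinear form in the coefficients $a_n$.

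The main obstacle is driving this last sum down to $O\big(N\sum|a_n|^2\big)$ without a spurious logarithmic loss: the crude estimate $1/|\log(m/n)|\ll N/|m-n|$ together with $|a_m||a_n|\le\tfrac12(|a_m|^2+|a_n|^2)$ only yields the weaker bound $O\big(N\log N\sum|a_n|^2\big)$. The clean way around this is to invoke the Montgomery--Vaughan refinement of Hilbert's inequality: for distinct real numbers $\lambda_r$ with $\delta_r:=\min_{s\ne r}|\lambda_r-\lambda_s|$ one has $\big|\sum_{r\ne s} x_r\overline{x_s}/(\lambda_r-\lambda_s)\big|\le \pi\sum_r \delta_r^{-1}|x_r|^2$. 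Applying this with $\lambda_n=\log n$ and $x_n=a_n$, and noting that for integers $1\le n\le N$ the gaps satisfy $\log(n+1)-\log n\ge \log(1+1/N)\ge 1/(2N)$ so that $\delta_n^{-1}\ll N$ uniformly in $n$, gives the off-diagonal bound $O\big(N\sum_{n\le N}|a_n|^2\big)$. Combining this with the diagonal main term completes the proof. Since this is a classical result I would simply cite Montgomery--Vaughan, or Theorem 9.1 of \cite{IK}, for the Hilbert-inequality input rather than reprove it here.
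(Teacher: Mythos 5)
The paper does not prove this lemma at all; it is the classical Montgomery--Vaughan mean value theorem and is simply cited (``see for example Theorem 9.1 of \cite{IK}''). So any proof you give goes beyond what the paper offers. Your sketch follows the standard argument, but as written it contains a genuine gap in the reduction to the Hilbert inequality.

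After evaluating the oscillatory integral you pass immediately to absolute values, stating that the off-diagonal contribution is bounded by $2\sum_{m\ne n}|a_m||a_n|/|\log(m/n)|$, and you then propose to bound \emph{this} sum by the Montgomery--Vaughan Hilbert inequality with $x_n=a_n$, $\lambda_n=\log n$. That does not work: the Hilbert inequality controls the signed bilinear form $\bigl|\sum_{r\ne s}x_r\overline{x_s}/(\lambda_r-\lambda_s)\bigr|$ and depends crucially on cancellation among the terms; it says nothing about the majorant $\sum_{r\ne s}|x_r||x_s|/|\lambda_r-\lambda_s|$. Indeed that majorant genuinely is of size $N\log N\sum|a_n|^2$ in general (take $a_n\equiv 1$: then $\sum_{m\ne n\le N}1/|\log(m/n)|\asymp N^2\log N$), so the log loss you flagged as an ``obstacle'' is intrinsic to the absolute-value form and no inequality can remove it. The fix is to postpone the triangle inequality: split the numerator $(n/m)^{2iT}-(n/m)^{iT}$ into its two terms, absorb the phases into the coefficients by setting, say, $x_n := a_n n^{-2iT}$ so that $a_m\overline{a_n}(n/m)^{2iT}=x_m\overline{x_n}$, apply the Hilbert inequality to each of the two resulting signed sums $\sum_{m\ne n}x_m\overline{x_n}/(\log m-\log n)$ using $|x_n|=|a_n|$ and $\delta_n\gg 1/N$, and only then take absolute values. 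With that correction the argument is sound (minor point: the Montgomery--Vaughan constant for variable gaps is $3\pi/2$, not $\pi$). Given that the paper treats this as a black box, you could also simply cite it, as the author does.
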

\noindent
We will also make use of the property that  Dirichlet polynomials supported on distinct sets of primes are approximately independent in the mean square sense. 
The precise formulation we will use is the following splitting lemma which appears in equation (16) of \cite{HS}.
\begin{lemma}\label{lem:Splitting}
Suppose for $1\leq j \leq J$ we have $j$ disjoint intervals $I_j$ and Dirichlet polynomials $A_j(s)= \sum_{n} a_j(n)n^{-s}$ such that $a_j(n)$ vanishes unless $n$ is composed of primes in $I_j$.
Then if $\prod_{j\leq J} A_j(s)$ is a Dirichlet polynomial of length $N$
\begin{align*}
\int_T^{2T} \prod_{j\leq J} |A_j(\tfrac{1}{2} + i t)|^2 dt= (T + O(N))\prod_{j\leq J}\left(\frac{1}{T}\int_T^{2T}|A_j(\tfrac{1}{2} + i t)|^2 dt\right)
\end{align*}
\end{lemma}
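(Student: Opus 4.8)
The plan is to expand $\prod_{j\le L}A_j$ into a single Dirichlet polynomial, apply the Montgomery--Vaughan mean value theorem (Lemma~\ref{lem:MVDP}) to reduce both sides of the identity to sums of weighted squares of Dirichlet coefficients, and then use the disjointness of the intervals $I_j$ to see that the resulting sum factors as a product over $j$. Write $\prod_{j\le L}A_j(s)=\sum_n b(n)n^{-s}$. Since the $I_j$ are pairwise disjoint sets of primes, every integer $n$ has a unique factorization $n=n_1\cdots n_L$ with each $n_j$ composed only of primes of $I_j$ (namely $n_j=\prod_{p\mid n,\ p\in I_j}p^{v_p(n)}$, and $b(n)=0$ unless every prime factor of $n$ lies in $\bigcup_j I_j$); for this factorization $b(n)=\prod_{j\le L}a_j(n_j)$, so $|b(n)|^2/n=\prod_{j\le L}\frac{|a_j(n_j)|^2}{n_j}$. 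Since $\prod_j A_j(s)$ has length $N$, writing $\prod_j A_j(\tfrac12+it)=\sum_{n\le N}(b(n)n^{-1/2})n^{-it}$ and invoking Lemma~\ref{lem:MVDP} gives
\[
\int_T^{2T}\Bigl|\,\prod_{j\le L}A_j(\tfrac12+it)\,\Bigr|^2\,dt=(T+O(N))\sum_{n\le N}\frac{|b(n)|^2}{n}.
\]

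The next step is to check that the cutoff $n\le N$ is harmless for the factorization. Writing $N_j$ for the length of $A_j$, the leading term of $\prod_j A_j$ is $n^{-s}$ with $n=\prod_j N_j$ and coefficient $\prod_j a_j(N_j)\ne 0$ --- there is no cancellation at the top, again because the $I_j$ are disjoint --- so in fact $N=\prod_j N_j$, and $b(n)$ already vanishes as soon as $n_1\cdots n_L>N$. Hence the constrained sum above equals the full one,
\[
\sum_{n\le N}\frac{|b(n)|^2}{n}=\sum_{n_1,\ldots,n_L}\ \prod_{j\le L}\frac{|a_j(n_j)|^2}{n_j}=\prod_{j\le L}\ \sum_{n_j}\frac{|a_j(n_j)|^2}{n_j}.
\]
Applying Lemma~\ref{lem:MVDP} to each $A_j$ separately gives $\frac1T\int_T^{2T}|A_j(\tfrac12+it)|^2\,dt=(1+O(N_j/T))\sum_{n_j}|a_j(n_j)|^2/n_j$. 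After pulling out of both sides of the claimed identity any constant $A_j$ (which factor out with no error), each remaining $N_j\ge 2$, so $\sum_j N_j\le\prod_j N_j=N$; multiplying these estimates over $j$ then yields $\prod_{j\le L}\frac1T\int_T^{2T}|A_j(\tfrac12+it)|^2\,dt=(1+O(N/T))\prod_{j\le L}\sum_{n_j}|a_j(n_j)|^2/n_j$, as long as $N\le T$, which we henceforth assume. Combining the three displays and absorbing $(T+O(N))\cdot O(N/T)$ into $O(N)$ completes the proof.

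All of the steps are routine applications of Lemma~\ref{lem:MVDP}; the only point needing care, and really the only content of the argument, is the middle one --- that truncating $\prod_j A_j$ at its true length $N$ does not destroy the factorization of $\sum_n |b(n)|^2/n$, and that the $L$ separate errors $O(N_j/T)$ aggregate to only $O(N/T)$ rather than, say, $O(LN/T)$. Both of these reduce to the single structural fact that the $I_j$ are disjoint, so that the supports of the $A_j$ do not overlap and their product has no cancellation at its leading coefficient; once this is spelled out the conclusion is immediate.
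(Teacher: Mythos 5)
The paper does not prove this lemma; it is quoted verbatim from equation (16) of Heap--Soundararajan \cite{HS}, so there is no in-paper argument to compare against. Your derivation from Lemma~\ref{lem:MVDP} is correct and is essentially the natural one: expand the product, use unique factorization across the disjoint prime intervals to write $b(n)=\prod_j a_j(n_j)$ and hence $\sum_n|b(n)|^2/n=\prod_j\sum_{n_j}|a_j(n_j)|^2/n_j$, then apply Montgomery--Vaughan on each side and match errors. Your observation that the individual errors aggregate to $O(N/T)$ rather than $O(LN/T)$ --- because $\sum_j N_j\le\prod_j N_j=N$ once constant factors are discarded --- is the right way to avoid an extra factor of $L$ in the error.

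Two small points worth flagging. First, you tacitly assume $N\le T$ when you exponentiate $\sum_j\log(1+O(N_j/T))$ and when you absorb $(T+O(N))\cdot O(N/T)$ into $O(N)$; without some bound of the form $N=O(T)$ the factors $1+O(N_j/T)$ need not be close to $1$ and the argument degrades. This is harmless --- the lemma is only ever invoked in the paper with $N\ll T^{1/2}$ --- but it should be stated, as Heap and Soundararajan do implicitly. Second, the claim that the leading coefficient $\prod_j a_j(N_j)$ is nonzero is only valid if you take $N_j$ to be the \emph{true} length of $A_j$ (the largest $n$ with $a_j(n)\ne 0$), not an arbitrary upper bound for the support; with that convention the identity $N=\prod_j N_j$ and the leading-coefficient argument are fine.
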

\noindent
The following variant due to Soundararajan \cite[lemma 3]{SRHMoments} will also be useful for handling moments of Dirichlet polynomials supported on primes.
\begin{lemma}\label{lem:MVDPPrimes}
Let $r$ be a natural number and suppose $N^r \leq T/\log T$. Then given any complex numbers $a_p$
\[
\int_{T}^{2T} \left|\sum_{p\leq N} \frac{a_p}{p^{i t}}\right|^{2r} dt \ll T r! \left(\sum_{p\leq N} |a_p|^2\right)^r.
\]
\end{lemma}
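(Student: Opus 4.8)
The plan is to reduce to the elementary mean value theorem for Dirichlet polynomials (Lemma \ref{lem:MVDP}) by expanding the $2k$-th power into a Dirichlet polynomial of length $N^k$. Writing $D(t) = \sum_{p\leq N} a_p p^{-it}$, we have $|D(t)|^{2k} = |D(t)^k|^2$, and raising to the $k$-th power gives
\[
D(t)^k = \sum_{p_1,\ldots,p_k \leq N} a_{p_1}\cdots a_{p_k} (p_1\cdots p_k)^{-it} = \sum_{n\leq N^k} \frac{b(n)}{n^{it}},
\]
where $b(n) = \sum a_{p_1}\cdots a_{p_k}$, the sum being over ordered $k$-tuples of primes with $p_1\cdots p_k = n$. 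Since this has length at most $N^k$, Lemma \ref{lem:MVDP} together with the hypothesis $N^k \leq T/\log T$ yields
\[
\int_T^{2T} |D(t)|^{2k}\, dt = (T + O(N^k)) \sum_{n\leq N^k} |b(n)|^2 \ll T \sum_{n} |b(n)|^2.
\]

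It then remains to show $\sum_n |b(n)|^2 \leq k!\,\bigl(\sum_{p\leq N}|a_p|^2\bigr)^k$. For a typical $n$ with prime factorization $n = q_1^{e_1}\cdots q_r^{e_r}$, one checks that $n$ is an ordered product of $k$ primes precisely when $e_1 + \cdots + e_r = k$, and in that case the number of such ordered factorizations is the multinomial coefficient $\binom{k}{e_1,\ldots,e_r}$, so $b(n) = \binom{k}{e_1,\ldots,e_r} a_{q_1}^{e_1}\cdots a_{q_r}^{e_r}$. Using the crude bound $\binom{k}{e_1,\ldots,e_r} \leq k!$ gives
\[
|b(n)|^2 \leq k!\,\binom{k}{e_1,\ldots,e_r}\, |a_{q_1}|^{2e_1}\cdots |a_{q_r}|^{2e_r},
\]
and summing over all $n$ — equivalently over all choices of distinct primes $q_i$ and exponents $e_i$ with $\sum e_i = k$ — the multinomial theorem collapses the right-hand side to $k!\,\bigl(\sum_{p\leq N}|a_p|^2\bigr)^k$, as required.

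There is no real obstacle here: the only ingredients are Lemma \ref{lem:MVDP}, the combinatorics of ordered prime factorizations, and the inequality $\binom{k}{e_1,\ldots,e_r}\le k!$. That last inequality is exactly what produces the factor $k!$ in the statement — matching the Gaussian moment heuristic $\Ex|D|^{2k}\approx k!\,(\Ex|D|^2)^k$ — and the hypothesis $N^k\leq T/\log T$ is needed only to absorb the off-diagonal term $O(N^k)$ into the main term $T$; any bound of the form $N^k = O(T)$ would suffice.
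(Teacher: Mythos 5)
Your proof is correct and is exactly the standard argument given in Soundararajan's paper, which this lemma cites (Lemma 3 of \cite{SRHMoments}): expand $D(t)^k$ into a Dirichlet polynomial of length $N^k$, apply Lemma \ref{lem:MVDP}, and bound $\sum_n |b(n)|^2$ via the multinomial coefficient inequality $\binom{k}{e_1,\ldots,e_r}\leq k!$ together with the multinomial theorem. Since the paper does not reprove this lemma but defers to \cite{SRHMoments}, there is nothing to compare beyond noting that you have correctly reproduced the cited proof.
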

\noindent
During the main mean value calculation, we will need to bound a certain product over primes. This product will be controlled with the following lemma, which is a special case of lemma 3.2 of \cite{Koukoulopoulos}.

\begin{lemma}\label{lem:cosPrimeSum}
Given $\d \in \R$ and $X \geq 2$  
\begin{align*}
\sum_{p\leq X} &\frac{\cos(\d \log p)}{p}  = \log|\zeta(1 + 1/\log X + i\d)| + O(1).
\end{align*}
\end{lemma}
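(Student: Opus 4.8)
The plan is to reduce the identity to a comparison between the truncated sum $\sum_{p\le X}\cos(\d\log p)/p$ and the full Dirichlet series for $\log\zeta$ on the line $\Re s = 1 + 1/\log X$, and then to bound the discrepancy with Mertens-type estimates. First I would write $\cos(\d\log p) = \Re p^{-i\d}$, so that the left-hand side equals $\Re\sum_{p\le X} p^{-1-i\d}$. On the other side, since $\Re s = 1 + 1/\log X > 1$ the Euler product for $\zeta$ converges absolutely; in particular $\zeta$ has no zero there, so $\log|\zeta|$ and the principal branch of $\log\zeta$ are well defined, and the prime-power terms with $k\ge 2$ in $\log\zeta(s) = \sum_p\sum_{k\ge 1}\frac{1}{kp^{ks}}$ contribute $O(1)$ uniformly for $\Re s\ge 1$. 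Hence
\[
\log|\zeta(1 + 1/\log X + i\d)| = \Re\sum_p \frac{1}{p^{1+1/\log X+i\d}} + O(1) = \sum_p \frac{\cos(\d\log p)}{p^{1+1/\log X}} + O(1),
\]
and the lemma follows once we show this series agrees with $\sum_{p\le X}\cos(\d\log p)/p$ up to $O(1)$.

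To see that, I would split $\sum_p \cos(\d\log p)\,p^{-1-1/\log X}$ at $p = X$. For the tail $p > X$ I bound trivially by $\sum_{p>X} p^{-1-1/\log X}$; writing $p = X^\tau$ and using Mertens' theorem to pass from the sum to an integral shows this is $\ll\int_1^\infty \frac{e^{-\tau}}{\tau}\,d\tau = O(1)$, uniformly in $X$ and $\d$. For the head $p\le X$, the two summands differ by $\frac{\cos(\d\log p)}{p}\bigl(1 - p^{-1/\log X}\bigr)$, and since $0\le 1 - e^{-u}\le u$ for $u\ge 0$ we have $0\le 1 - p^{-1/\log X}\le \frac{\log p}{\log X}$ whenever $p\le X$. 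So the head contributes at most $\frac{1}{\log X}\sum_{p\le X}\frac{\log p}{p} = \frac{1}{\log X}\bigl(\log X + O(1)\bigr) = O(1)$ by Mertens. Combining the head and tail bounds gives the claim.

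The argument is routine and I do not foresee a genuine obstacle; the only points to watch are that every estimate be uniform in the shift $\d$ — which is automatic, since the oscillation is always discarded via $|\cos|\le 1$ — and that $\zeta$ is zero-free on $\Re s = 1 + 1/\log X$ so the logarithms are unambiguous. The only external inputs are Mertens' estimates $\sum_{p\le X}\frac{1}{p} = \log\log X + O(1)$ and $\sum_{p\le X}\frac{\log p}{p} = \log X + O(1)$; as noted above, the result is in any case a special case of Lemma 3.2 of \cite{Koukoulopoulos}.
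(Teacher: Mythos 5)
Your proof is correct. The paper does not give its own proof of this lemma; it simply cites it as a special case of Lemma 3.2 of Koukoulopoulos, and your argument supplies the standard direct derivation that underlies that reference: expand $\log\zeta(s)$ on the line $\Re s = 1 + 1/\log X$ as a prime sum (the $k\ge 2$ prime-power terms contributing $O(1)$ uniformly), then compare the smooth weight $p^{-1/\log X}$ to the sharp cutoff at $X$ using $0\le 1-e^{-u}\le u$ and the Mertens estimate $\sum_{p\le X}\frac{\log p}{p}=\log X+O(1)$ for the head, and partial summation together with the convergence of $\int_1^\infty e^{-\tau}\tau^{-1}\,d\tau$ for the tail. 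All estimates are indeed uniform in $\d$ since the oscillation is discarded trivially, so the argument is complete and matches the intended content of the cited lemma.
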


To handle the shifted moment of zeta when some of the shifts $t + \a_k$ lie in the ``bad" set $[T/2,5T/2]\setminus \G$, we take advantage of the incremental structure present.
For each $1 \leq j \leq L$, define
\begin{align*}
\B_j := \{t\in [T/2&,5T/2]: |\p_{r,T_s} (\tfrac{1}{2} + i t)| \leq K_j \text{ for all } 1\leq r < j\text{ and } r\leq s \leq  L \\
&\text{ but } |\p_{j,T_s} (\tfrac{1}{2} + i t)|  > K_{j} \text{ for some } j \leq s \leq L \}.
\end{align*}
Notice that
\[
[T/2, 5T/2] \setminus \G = \bigsqcup_{j \leq L} \B_j.
\]
On the bad sets $\B_j$ the series expansion $\n_{j,T_s}$ of $\exp(P_{j,T_s})$ is a poor approximation, so we are forced to estimate $\log \zeta$ using only the primes up to $T_{j-1}$.
While the resulting Dirichlet polynomial is too short to obtain sharp bounds, we can overcome this loss by multiplying by a suitably large even power of $|P_{j,T_s}| / K_{j}$, which is larger than 1 on $\B_j$. 
If we then extend the range of integration to all of $[T,2T]$,  we can still win as the event $|\p_{j,T_s} (\tfrac{1}{2} + i t)|  > K_{j}$ is quite rare.
For example, we will make use of the following bound.
\begin{lemma}\label{lem:measB1}
If $T_1^r \leq T/\log T$ then
\[
\int_{T/2}^{5T/2} |\p_{1,X}(\tfrac{1}{2} + i t)|^{2 r} dt \ll 2^{2r} r! T (\log_2 T)^r.
\]
Therefore
\[
\text{meas}(\B_1) \ll T e^{-K_1^2/4\log_2 T} \ll_A T(\log T)^{-A}.
\]
\end{lemma}
\begin{proof}

Write $\p_{1,X} = \p_{1,X}^{(1)} + \p_{1,X}^{(2)}$, where $\p_{1,X}^{(1)}$ is the sum of primes up to $T_1$ and $\p_{1,X}^{(2)}$ is the sum of squares of primes up to $\log T$. 
Then
\[
\int_{T/2}^{5T/2} |\p_{1,X}(\tfrac{1}{2} + i t)|^{2 r} dt \leq 2^{2r} \int_{T/2}^{5T/2} |\p_{1,X}^{(1)}(\tfrac{1}{2} + i t)|^{2 r} dt + 2^{2r}\int_{T/2}^{5T/2} |\p_{1,X}^{(2)}(\tfrac{1}{2} + i t)|^{2 r} dt.
\]
By Lemma \ref{lem:MVDPPrimes}, this is at most
\[
\ll 2^{2r} r! T (\log_2 T_1 + O(1))^r + 2^{2r} r! T (\zeta(2)/4)^r \ll  2^{2r} r! T (\log_2 T)^r.
\]
To deduce the second bound we note that
\[
\text{meas}(\B_1) \ll \max_{s\leq L} \frac{1}{K_1^{2 r}} \int_{T/2}^{5T/2} |\p_{1,T_s}(\tfrac{1}{2} + i t)|^{2 r} dt.
\]
We may now conclude by taking $r = \lceil K_1^2/4 \log_2 T \rceil$ and using Stirling's approximation.
\end{proof}

The proof of Theorem \ref{thm:main} is based on the following partition of $[T,2T]$: Given a subset $A$ of $[m] := \{1,\ldots, m\}$ define 
\[
\G_A :=  \left\{t\in [T,2T]: t + \a_k \in \G \text{ if and only if } k \in A\right\}.
\]
Then we can decompose $[T,2T]$ into the disjoint union
\begin{equation}\label{eqn:GoodBadPartition}
[T,2T] = \bigsqcup_{A \subseteq [m]} \G_A.
\end{equation}
In section \ref{sec:GoodShifts}, we will handle the integral over the set $\G_{[m]}$ where all of the shifts $t + \a_k$ are good.
In  section \ref{sec:BadShifts}, we will handle the cases where some of the shifts $t + \a_k$ are bad.
We will have to further partition the sets $\G_A$ with $A\subsetneq [m]$ according to which of the sets $\B_j$ the bad shifts $t + \a_k$ lie in.

\section{Moments over good shifts}\label{sec:GoodShifts}

By Lemma \ref{lem:logZetaUpperBound} with $X = T_L$ we find
\begin{align*}
\int_{\G_{[m]}} \prod_{k = 1}^m |\zeta(\tfrac{1}{2} + i (t + \a_k))|^{2 \b_k} dt 
\ll_{\bm{\b}} \int_{\G_{[m]}} \prod_{k = 1}^m \exp\left(2\b_k \Re \sum_{j = 1}^L  \p_{j,T_L} (\tfrac{1}{2} + i(t + \a_k)) \right) dt
\end{align*}
By definition of $\G_{[m]}$ the hypotheses of Lemma \ref{lem:expTaylorSeries} are satisfied for all $j\leq L$, so the integral over $\G_{[m]}$ can be bounded by 
\begin{align}\label{eqn:goodShiftDPMV}
\ll_{\bm{\b}} \int_{\G_{[m]}} &\prod_{k = 1}^m  \prod_{j = 1}^L (1+ e^{-50\b_\ast^2 K_j})^{-1}  |\n_{j,T_L}(\tfrac{1}{2} + i (t + \a_k) ; \b_k)|^2 dt \nonumber\\
&\ll_{\bm{\b}} \int_T^{2T} \prod_{k = 1}^m  \prod_{j = 1}^L   |\n_{j,T_L}(\tfrac{1}{2}  + i (t + \a_k) ; \b_k)|^2 dt.
\end{align}

We are now in a setting where me may use the mean value theorem for Dirichlet polynomials.
First note that $\prod_{k = 1}^m \n_{j, T_L}(s+ i\a_k; \b_k)$ has length at most $T_j^{200m \b_\ast^2 K_j}$ for $j = 1$ and $T_j^{100m\b_\ast^2  K_j}$ for $2\leq j \leq L$. 
Therefore the integrand $\prod_{j\leq L}\prod_{k = 1}^m \n_{j, T_L}(s+ i\a_k; \b_k)$ has length at most $T_1^{200m\b_\ast^2 K_1} T_2^{100m\b_\ast^2 K_2} \cdots T_L^{100m\b_\ast^2 K_L} \leq T^{1/2}$ as $T_L \leq T^{e^{-1000\b_\ast}}$, so it is a short Dirichlet polynomial.
By Lemma \ref{lem:Splitting}, we are left with the task of computing
\[
\int_{T}^{2T} \prod_{k = 1}^m |\n_{j, T_L}(\tfrac{1}{2}  + i (t + \a_k); \b_k)|^2 dt
\]
for each $1\leq j \leq L$.
To do this, we must analyze the coefficients of the $\n_{j,X}$. 
Denote $a_X(p):= \log(X/p)  p^{-1/\log X}/\log X$ and define multiplicative functions $g_X$ and $h_X$ satisfying
\[
g_X(p^r;\b) :=  \frac{\b^r a_X(p)^{r}}{r!}, 
\]
and
\[
h_X(p^r;\b) :=  g_X(p^r; \b) + 1_{p\leq \log T} \sum_{t = 1}^{r/2} \frac{\b^{r-t} a_X(p)^{r-t}}{2^t t! (r-2t)!} .
\]
Next define $c_{1}(n)$ to be 1 if $n$ can be written as $n = n_1 \cdots n_r$ where $r \leq 100\b_\ast^2 K_1$ and each $n_i$ is either a prime $\leq T_1$ or a prime square $\leq \log T$. 
Finally for $2\leq j \leq L$ set $c_{j}(n)$ to be 1 if $n$ is the product of at most $100\b_\ast^2 K_j$ not necessarily distinct primes in $(T_{j-1},T_j]$.

\begin{prop}\label{prop:Njcoeffs}
For $2\leq j \leq L$
\[
\n_{j,X}(s;\b) = \sum_{p\mid n \Rightarrow p \in (T_{j-1},T_j] } \frac{g_X(n;\b)c_{j}(n)}{n^s}.
\]
If
\[
\n_{1,X}(s;\b) = \sum_{p\mid n \Rightarrow p \in (T_{j-1},T_j]} \frac{f_X(n;\b)}{n^s}
\]
then $f_X(n;\b) \leq h_X(n;\b) c_{1}(n)$ and $f_X(p;\b) = g_X(p;\b)$.
\end{prop}
\begin{proof}
When $j \geq 1$ write $p_1,\ldots , p_a$ for the primes in $(T_{j-1},T_j]$. 
First assume $j\geq 2$.
By applying the multinomial theorem to the definition of $\n_{j,X}(s;\b)$ we find it equals
\[
\sum_{m\leq 100\b_\ast^2 K_j} \frac{\b^m}{m!} \left(\sum_{p\in (T_{j-1}, T_j]} \frac{a_X(p)}{p^{s}}\right)^m = \sum_{m\leq K} \frac{\b^m}{m!} \sum_{\substack{u_1 + \ldots + u_a = m \\ u_r \geq 0}} \binom{m}{u_1, \ldots , u_a} \prod_{r = 1}^a \frac{a_X(p)^{u_r}} {p_r^{u_r s}}.
\]
Therefore if $n = p_1^{u_1} \cdots p_{a}^{u_{a}}$ with $u_1 + \cdots + u_a = m$, the coefficient of $n^{-s}$ in $\n_{j,X}(s)$ is
\[
c_{j}(n) \frac{\b^m}{m!} \binom{m}{u_1, \ldots, u_a} \prod_{r = 1}^a a_X(p)^{u_r}  =  g_X(n;\b)c_{j}(n).
\]
Next we will handle the case of $j = 1$. Now we will also denote the primes up to $\log T$ by $p_1, \ldots p_b$ with $b < a$. 
The multinomial theorem tells us that $\n_{1,X}$ equals 
\begin{align*}
&\sum_{m\leq 100\b_\ast^2 K_j} \frac{\b^m}{m!} \left(\sum_{p \leq T_1} \frac{a_X(p)}{p^{s}} + \sum_{p\leq \log T} \frac{1}{2p^{2s}}\right)^m \\
= \sum_{m\leq K} \frac{\b^m}{m!} &\sum_{\substack{u_1 + \ldots + u_a + v_1 + \ldots + v_b = m \\ u_r, v_r\geq 0}} \binom{m}{u_1, \ldots , u_a, v_1 , \ldots, v_b} \prod_{r = 1}^a \frac{a_X(p)^{u_r}} {p_r^{u_r s}} \prod_{r = 1}^b \frac{1}{2^{v_r} p_r ^{2 v_r s}}.
\end{align*}
The claim now follows by considering the possible ways to write $n = p_1^{u_1} \cdots p_{a}^{u_{a}}$ as a product of the $p_r$ with $r\leq a$ or of $p_r^2$ with $r\leq b.$ 

\end{proof}

We may write
\[
\prod_{k = 1}^m \n_{j, X}(s + i\a_k; \b_k) = \sum_{n\geq 1} \frac{b_{j, X, \bm{\a}, \bm{\b}} (n)}{n^s},
\]
where $b_{1,X,\bm{\a},\bm{\b}}(n)$ is the $m$-fold Dirichlet convolution of $f_X(n;\b_k) n^{-i\a_k}$ and $b_{j,X,\bm{\a},\bm{\b}}(n)$ is the $m$-fold convolution of $g_X(n;\b_k) c_{j}(n) n^{-i\a_k}$ for $2 \leq j \leq L$.
For technical reasons, we will need  to use two other sets of coefficients.
First define  $b'_{j,X,\bm{\a},\bm{\b}}(n)$ to be the $m$-fold  convolution of $h_X(n;\b_k) n^{-i\a_k} 1_{p|n \Rightarrow p\in (T_0,T_1]}$ when $j=1$ and  the $m$-fold  convolution  of $g_X(n;\b_k) n^{-i\a_k}1_{p|n \Rightarrow p\in (T_{j-1},T_j]}$ when $2\leq j \leq L$.
Finally, let $b''_{1,X,\bm{\a},\bm{\b}}(n)$ be the $m$-fold  convolution of $h_X(n;\b_k) 1_{p|n \Rightarrow p\in (T_0,T_1]}$ when $j= 1$ and the $m$-fold  convolution of $g_X(n;\b_k) 1_{p|n \Rightarrow p\in (T_{j-1},T_j]}$ when  $2\leq j \leq L$.
Unlike $b_{j,X,\bm{\a},\bm{\b}}$, the coefficients $b'_{j,X,\bm{\a},\bm{\b}}$ and $b''_{j,X,\bm{\a},\bm{\b}}$ are multiplicative, and they satisfy the bound $|b_{j,X,\bm{\a},\bm{\b}}(n)| , |b'_{j,X,\bm{\a},\bm{\b}}(n)| \leq b''_{j,X,\bm{\a},\bm{\b}}(n)$.
We will require the following information about these coefficients.

\begin{lemma}\label{lem:nCoeffPatrol}

For $1\leq j \leq L$ and $p \in (T_{j-1},T_j]$
\[
b_{j, X, \bm{\a}, \bm{\b}}(p) = a_X(p) \sum_{k = 1}^m \b_k p^{-i\a_k},
\]
and $b''_{j, X, \bm{\a}, \bm{\b}}(p) \leq \b_\ast$. If $r \geq 2$ 
\[
b''_{j, X, \bm{\a}, \bm{\b}}(p^r) \leq \frac{\b_\ast^r m^r}{r!}
\]
holds whenever $2\leq j \leq L$ or $p > \log T$, and otherwise
\[
b''_{1, X, \bm{\a}, \bm{\b}}(p^r) \leq  m \b_\ast^r r^{2m} e^{-r \log (r/m)/2m + 2r}.
\]

\end{lemma}
\begin{proof}
The first two assertions are immediate from the definition of the Dirichlet convolution.
To prove the upper bound when $r \geq 2$ and $j \neq 1$, first note that 
\begin{align*}
b_{j, X, \bm{\a}, \bm{\b}}(p^r) \leq \sum_{r_1 + \cdots + r_m = r} \prod_{l = 1}^m \frac{\b_l^{r_l}}{r_l!} \leq  \frac{\b_\ast^r}{r!} \sum_{r_1 + \cdots + r_m = r}  \binom{r}{r_1,\ldots ,r_m} =  \frac{\b_\ast^r m^r}{r!}.
\end{align*}
To handle the $j=1$ case, we can bound $h_X(p^r;\b)$ by 
\[
\sum_{t = 0}^{r/2} \frac{\b^{r-t}}{2^t t! (r-2t)!} \leq \b_\ast^r \sum_{t = 0}^{r/2} \frac{1}{2^t t! (r-2t)!}.
\]
In fact when $p >\log T$ we have the stronger bound $\b_r/r!$.
To bound the sum on the right hand side, note by Stirling's formula the maximum summand occurs near the solution to $(r - 2t)^2 = 2t$.
One more application of Stirling's formula shows that the maximum is $\leq e^{-r \log r / 2 + 2r}$, so this sum is bounded by $re^{-r \log r / 2 + 2 r}$.
It now follows that
\begin{align*}
|b_{1, X, \bm{\a}, \bm{\b}}(p^r)| \leq \b_\ast^r r^m \sum_{r_1 + \cdots + r_m = r} \prod_{l = 1}^m e^{-r_l \log r_l/2 + 2 r_l} \leq \b_\ast^r r^m e^{-r \log (r/m) /2m + 2 r} \binom{m+r - 1}{r},
\end{align*}
where we have used the fact that at least one $r_l$ must exceed $r/m$.
To conclude, notice that $\binom{m+r - 1}{r}$ is a polynomial of degree $m-1$ in $r$ with coefficients all bounded by 1, so it is at most $m r^{m-1}$.

\end{proof}
\noindent
We can now compute

\begin{prop}\label{prop:NMoment}
For $1\leq j  \leq L$ 
\begin{align*}
\int_{T}^{2T} \prod_{k = 1}^m &|\n_{j, X}(\tfrac{1}{2} + i (t + \a_k); \b_k)|^2 dt 
\\ &\leq (T + O(T^{1/2})) \prod_{p\in (T_{j-1},T_j]} \left(1 + \frac{|b_{j, X, \bm{\a}, \bm{\b}}(p)|^2}{p} + O_{\bm{\b}}\left(\frac{1}{p^2}\right)\right) + O_{\bm{\b}}(e^{- 50\b_\ast^2 K_j }).
\end{align*}
\end{prop}
\begin{proof}
By Lemma \ref{lem:MVDP} the mean value of interest equals 
\[
(T + O(T^{1/2}))  \sum_n \frac{|b_{j, X, \bm{\a}, \bm{\b}}(n)|^2}{n}.
\]
We will now show that we may replace $b$ with $b'$ at a negligible cost.
If $b_{j, X, \bm{\a}, \bm{\b}}(n) \neq b'_{j, X, \bm{\a}, \bm{\b}}(n)$ then it follows that $\Omega(n) \geq  100\b_\ast^2 K_j$, where $\Omega(n)$ is the number of prime factors of $n$ counting multiplicity.
Therefore when we replace $b$ with $b'$, we incur an error of at most
\[
e^{- 100\b_\ast^2 K_j} \sum_{p|n \Rightarrow p\in(T_{j-1},T_j]} \frac{b''_{j, X, \bm{\a}, \bm{\b}}(n)^2 e^{\Omega(n)}}{n}.
\]
Since the coefficients $b''$ are multiplicative, this is
\[
\ll e^{-100\b_\ast^2 K_j} \prod_{p\in (T_{j-1},T_j]} \left(1 + \frac{\b_\ast^2 e}{p} + O \left(\frac{\b_
\ast^4 m^4}{p^2}\right)\right) \ll_{\bm{\b}} e^{- 50\b_\ast^2K_j },
\]
where we have used Lemma \ref{lem:nCoeffPatrol} to bound the sum over prime powers.
Therefore the mean value of interest is 
\[
\leq (T + O(T^{1/2}))\sum_{p|n \Rightarrow p\in(T_{j-1},T_j]}  \sum_{r\geq 0} \frac{|b'_{j, X, \bm{\a}, \bm{\b}}(p^r)|^2}{p^r} + O_{\bm{\b}}(e^{- 50\b_\ast^2 K_j }).
\]
The claim now follows by Lemma \ref{lem:nCoeffPatrol} and multiplicativity.

\end{proof}
\noindent
We may finally deduce

\begin{prop}\label{prop:goodShiftedMoment}
Assuming the Riemann hypothesis
\begin{align*}
\int_{\G_{[m]}} \prod_{k\leq m} |\zeta(\tfrac{1}{2} + i (t + \a_k))|^{2 \b_k} dt \ll_{\bm{\b}} 
T (\log T)^{\b_1^2 + \cdots + \b_m^2} \prod_{1\leq j < k \leq m} |\zeta(1 + i(\a_j - \a_k) + 1/ \log T )|^{2\b_j \b_k}.
\end{align*}
\end{prop}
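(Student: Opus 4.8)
The plan is to assemble the estimates already in hand. We have reduced via \eqref{eqn:GoodBadPartition} and the partition of $\G_{[m]}$ into the sets $\c_\ell$ to bounding $\sum_\ell \int_{\G_{[m]}\cap \c_\ell} \prod_k |\zeta(\tfrac12 + i(t+\a_k))|^{2\b_k}\, dt$. By Proposition \ref{prop:primeSquareTruncation} the contribution of $\ell$ with $|\ell|_\infty > 2\log_3 T$ is negligible (of smaller order than the claimed main term, by the remark following that proposition), so it suffices to treat $\ell$ with $|\ell|_\infty \leq 2\log_3 T$. For each such $\ell$, the chain of inequalities leading to \eqref{eqn:goodShiftDPMV}, followed by Lemma \ref{lem:Splitting} (legitimate since the full integrand is a Dirichlet polynomial of length $\leq T^{1/2}$), reduces the integral over $\G_{[m]}\cap\c_\ell$ to a product of the single $Q_{|\ell|_\infty}$-moment and the $L$ mean values $\int_T^{2T}\prod_k |\n_{I_{j,\ell},T_L,\a_k}(1+1/\log T_L + it;\b_k)|^2\,dt$. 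These are estimated by Proposition \ref{prop:QMoment} and Proposition \ref{prop:NMoment} respectively.

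The core computation is to multiply out Proposition \ref{prop:NMoment} over $j = 1,\ldots, L$. Using Lemma \ref{lem:nCoeffPatrol}, $|b_{I_{j,\ell},T_L,\bm\a,\bm\b}(p)|^2 = a_{T_L}(p)^2\,\bigl|\sum_{k} \b_k p^{-i\a_k}\bigr|^2 = a_{T_L}(p)^2\bigl(\sum_k \b_k^2 + 2\sum_{j<k}\b_j\b_k\cos((\a_j-\a_k)\log p)\bigr)$. Taking logarithms of the Euler products and summing over $1 \le j \le L$, i.e. over primes $p \in (2^{|\ell|_\infty+1}, T_L]$, the $O(1/p^2)$ terms converge, and $\log(1 + x/p + O(1/p^2)) = x/p + O(1/p^2)$ contributes
\[
\sum_{p \leq T_L} \frac{a_{T_L}(p)^2}{p}\Bigl(\sum_{k}\b_k^2 + 2\sum_{j<k}\b_j\b_k\cos((\a_j-\a_k)\log p)\Bigr) + O_{\bm\b}(1).
\]
Since $0 \le a_{T_L}(p) \le 1$, I would replace $a_{T_L}(p)^2$ by $1$ at the cost of a bounded error in the diagonal terms $\sum_k\b_k^2\sum_{p\le T_L} 1/p = \sum_k \b_k^2 (\log_2 T_L + O(1)) = \sum_k\b_k^2\log_2 T + O_{\bm\b}(1)$ (here $\log_2 T_L = \log_2 T + O(\log_3 T)$ but the discrepancy multiplies $\sum\b_k^2$ and is absorbed into the implied constant after exponentiating — one must check this carefully, as it is a multiplicative factor of size $(\log T)^{O(\log_3 T/\log_2 T)} = 1 + o(1)$). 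For the off-diagonal terms, again replacing $a_{T_L}(p)^2$ by $1$ with bounded error and truncating the lower range $p \le 2^{|\ell|_\infty+1}$ (also a bounded error since $\sum_{p \le 2^{|\ell|_\infty+1}} 1/p \ll \log(|\ell|_\infty) = O(\log_4 T)$ — wait, this is not bounded, so one must instead keep this truncation and note it contributes $O_{\bm\b}(\log_4 T)$ inside the exponential, i.e. a factor $(\log_3 T)^{O_{\bm\b}(1)}$, which is again absorbed since the final bound has an implied constant and the zeta-factors on the $1$-line are $\gg 1/\log T$ so this is lower order — actually cleaner: note $|\zeta(1+i\d+1/\log T)| \ge c/\log T$ always, and one extra $(\log_3 T)^{O(1)}$ is dwarfed). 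Then Lemma \ref{lem:cosPrimeSum} with $X = T_L$ gives $\sum_{p\le T_L}\cos((\a_j-\a_k)\log p)/p = \log|\zeta(1 + 1/\log T_L + i(\a_j-\a_k))| + O(1)$, and $1/\log T_L = 1/\log T + O(\log_3 T/(\log T))$, which changes the zeta value by a bounded factor. Exponentiating yields the factor $(\log T)^{\sum_k\b_k^2}\prod_{j<k}|\zeta(1 + i(\a_j-\a_k) + 1/\log T)|^{2\b_j\b_k}$ up to an $O_{\bm\b}(1)$ multiplicative constant.

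Finally I would collect: the prefactor $e^{O(\b_\ast e^{|\ell|_\infty/2})}$ from \eqref{eqn:goodShiftDPMV} times the $Q$-moment bound $Te^{-|\ell|_\infty e^{|\ell|_\infty/2}}$ from Proposition \ref{prop:QMoment} gives $T e^{-|\ell|_\infty e^{|\ell|_\infty/2}/2}$ say, which is summable over $\ell$ with $|\ell|_\infty \le 2\log_3 T$ — the number of $\ell$ with $|\ell|_\infty = l$ being $O((l+1)^m)$, so $\sum_\ell e^{-|\ell|_\infty e^{|\ell|_\infty/2}/2} = O_{\bm\b}(1)$ — while the product-over-$j$ of the $\n$-mean-values contributes the main term computed above, uniformly in $\ell$. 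Multiplying the $(T + O(T^{1/2}))$ factors from the $L+1$ applications of the mean value theorems gives $T + O(T^{1/2 + o(1)}) = T(1 + o(1))$ since $L \ll \log_3 T$. Summing over $\ell$ then produces exactly $\ll T(\log T)^{\b_1^2 + \cdots + \b_m^2}\prod_{j<k}|\zeta(1+i(\a_j-\a_k)+1/\log T)|^{2\b_j\b_k}$, as claimed.

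The main obstacle I anticipate is bookkeeping the various shifts between $T$ and $T_L$ (in the exponent $1/\log X$, in the range $p \le X$, and in $\log_2 X$) and confirming that each discrepancy is either genuinely $O_{\bm\b}(1)$ inside the exponential or at worst contributes a multiplicative $(\log_3 T)^{O_{\bm\b}(1)}$ that is dominated by the $\gg 1/\log T$ lower bound for the $1$-line zeta factors; and likewise checking that the cutoff $a_{T_L}(p)^2 \mapsto 1$ in the diagonal sum does not perturb the power of $\log T$. None of this is deep, but it is where an error could hide.
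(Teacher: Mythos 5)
Your overall route is the paper's route: restrict to $|\ell|_\infty \leq 2\log_3 T$ via Proposition \ref{prop:primeSquareTruncation}, use \eqref{eqn:goodShiftDPMV} and Lemma \ref{lem:Splitting} with Propositions \ref{prop:QMoment} and \ref{prop:NMoment}, multiply out the Euler products, and apply Mertens and Lemma \ref{lem:cosPrimeSum}. Your bookkeeping of the prefactor $e^{O(\b_\ast e^{|\ell|_\infty/2})}$ against the decay $e^{-|\ell|_\infty e^{|\ell|_\infty/2}}$ from Proposition \ref{prop:QMoment}, and of the $(T+O(T^{1/2}))$ factors, is correct and actually more explicit than the paper, which passes over those points silently.

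The one place your write-up goes astray is the handling of the lower cutoff $p \leq 2^{|\ell|_\infty + 1}$. You separate the diagonal and off-diagonal contributions in the exponent \emph{before} dealing with the truncation, observe that $\sum_{p\leq 2^{l+1}}1/p = O(\log_4 T)$, and then claim the resulting $(\log_3 T)^{O_{\bm\b}(1)}$ multiplicative factor ``is dwarfed since $|\zeta(1+i\d+1/\log T)| \geq c/\log T$.'' That last inference is not valid: the claim to prove is an upper bound with an $O_{\bm\b}(1)$ implied constant, and an extra unbounded $(\log_3 T)^{O(1)}$ factor is not absorbable into that constant regardless of how small the zeta-factors can be. The worry is in fact unfounded, but for a different and simpler reason: the Euler factor appearing in Proposition \ref{prop:NMoment} is $1 + \sum_{r\geq 1}|b_{I_{j,\ell},T_L,\bm\a,\bm\b}(p^r)|^2/p^r$, a sum of nonnegative terms, hence $\geq 1$. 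Therefore $\prod_{2^{l+1}<p\leq T_L}(\cdots) \leq \prod_{p\leq T_L}(\cdots)$ \emph{before} you split into diagonal and off-diagonal pieces, and no compensating factor is needed. (The paper inserts a crude $(l+1)^{\b_\ast^2}$ at this step, which is harmless since $\sum_l (l+1)^{m+\b_\ast^2}e^{-le^{l/2}}$ converges, but even that is more than necessary.) One further small point: you write $\log_2 T_L = \log_2 T + O(\log_3 T)$, but since $\log T_L \asymp_{\bm\b} \log T$ the difference is $O_{\bm\b}(1)$, so there is no $(\log T)^{O(\log_3 T/\log_2 T)}$ factor to worry about in the diagonal term either.
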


\begin{proof}
We have shown that the shifted moment on the good set is bounded by
\[
\ll T \prod_{j \leq L} \left(\prod_{p\in (T_{j-1},T_j]} \left(1 + \frac{|b_{j, X, \bm{\a}, \bm{\b}}(p)|^2}{p} + O_{\bm{\b}}\left(\frac{1}{p^2}\right)\right) + O_{\bm{\b}}(e^{-50\b_\ast^2K_j }) \right).
\]
To conclude, we first  note that 
\[
|b_{j, X, \bm{\a}, \bm{\b}}(p)|^2 \leq \sum_{j,k\leq m} \frac{\b_j \b_k} {p^{i(\a_j - \a_k)}} = \sum_{j\leq m} \b_j^2 + 2 \sum_{1\leq j < k\leq m} \b_j \b_k \cos((\a_j - \a_k) \log p) 
\]
and then use \ref{lem:cosPrimeSum} to bound the resulting products over primes.
\end{proof}

\section{Moments over bad shifts}\label{sec:BadShifts}
We now consider the integral over $\G_A$ where $A$ is a proper subset of $[m]$.
Without loss of generality we will write $A = [m]\setminus [a]$.
For each $t \in \G_A$, there is a function $F_t: [a] \rightarrow [L]$ such that $t + \a_j \in \B_{f(j)}$.
We will further partition $\G_A$ into the sets
\[
\B_{A,n} = \{t\in \G_A: \min_{j \in [a]} F_t(j) = n \}.
\]
First we handle the case of $n = 1$.

\begin{prop}\label{prop:B1BadMoment}
Assuming the Riemann hypothesis
\[
\int_{\B_{A,1}} \prod_{k = 1}^m |\zeta(\tfrac{1}{2}+ i(t + \a_k))|^{2\b_k} dt \ll_{A, \bm{\b}} T (\log T)^{-A}.
\]
\end{prop}
\begin{proof}
Because $\B_{A,1}$ is contained in the union of the $a$ translates $\B_1 - \a_j$ for $j \leq a$, the bound is a consequence of Lemma \ref{lem:measB1}, the Cauchy-Schwarz inequality, and Harper's \cite{Harper} bound for the moments of zeta (say).
\end{proof}

Now for fixed $n > 1$  we may use Lemma \ref{lem:logZetaUpperBound}  with $X = T_{n-1}$  to find
\begin{align*}
\int_{\B_{A,n}} \prod_{k = 1}^m &|\zeta(\tfrac{1}{2} + i (t + \a_k))|^{2 \b_k} dt \\
&\ll
\int_{\B_{A,n}} \prod_{k = 1}^m \exp\Bigg(2\b_k \Re \Bigg( \sum_{j < n}  \p_{j,T_{n-1}} (\tfrac{1}{2}  + i(t + \a_k)) + 2\b_k/ c_{n - 1} \Bigg) dt\\
&\ll  ~e^{2\b_\ast/ c_{n - 1}}  \int_{\B_{A,n}}\prod_{k = 1}^m \prod_{j < n} \exp\left(2\b_k \Re \p_{j,T_{n-1}} (\tfrac{1}{2}  + i(t + \a_k)) \right)dt \\
&\ll  ~e^{2\b_\ast/ c_{n - 1}} \max_{\substack{\ell \in [a] \\ s \in [L]}}
\int_T^{2T} \prod_{k = 1}^m \prod_{j < n} |\n_{j,T_{n-1}} (\tfrac{1}{2}+ i(t + \a_k) ; \b_k) |^2 \\
&\qquad  \qquad \qquad \qquad \qquad\times |\p_{n,T_s}(\tfrac{1}{2} + i (t + \a_\ell))/K_{n}|^{2 \lceil 1/ 10 c_{n}\rceil} dt
\end{align*}
Unlike the previous section, we have now also used the definition of the bad set $\B_{A,n}$.
By Lemma \ref{lem:Splitting}, all that remains is to control the moments of $\p_{n,T_s}$  on the half line.

\begin{prop}\label{prop:PMoment}
Uniformly for $\ell \in [m]$ and $s \in [L]$
\[
\int_{T/2}^{5T/2} |\p_{n,T_s}(\tfrac{1}{2} + i (t + \a_\ell))/K_{n}|^{2 \lceil 1/ 10 c_{n}\rceil}  dt \ll T e^{-\log(1/c_{n})/20 c_{n}}.
\]
\end{prop}
\begin{proof}
Trivially bounding $p^{-i\a_\ell}$ and $a_{X_s}(p)$ by 1, Lemma \ref{lem:MVDPPrimes} gives a bound of 
\[
T K_{n}^{-2r} r! \left(\sum_{p\in(T_{n-1},T_{n}]} \frac{1}{p} \right)^r
\]
where $r  = \lceil 1/ 10 c_{n}  \rceil$.
The sum in parentheses is asymptotic to $\log(c_{n}/c_{n - 1}) = e$, so is at most $2e$ for large $T$, say.
The conclusion follows by recalling $K_{n} = c_{n}^{-3/4}$ and applying Stirling's approximation. 
\end{proof}

We now have all the necessary tools to bound the shifted moment (\ref{eqn:shiftedMoments}) over the bad sets.

\begin{prop}
Assuming the Riemann hypothesis
\[
\int_{[T,2T]\setminus \G_{[m]}} \prod_{k = 1}^m |\zeta(\tfrac{1}{2} + i (t + \a_k))|^{2 \b_k} dt \ll_{A,\bm{\b}}  T (\log T)^{\b_1^2 + \cdots + \b_m^2} \prod_{1\leq j < k \leq m} |\zeta(1 + i(\a_j - \a_k) + 1/ \log T )|^{2\b_j \b_k}.
\]
\end{prop}

\begin{proof}

Applying Lemma \ref{lem:Splitting} along with Propositions \ref{prop:NMoment}, \ref{prop:B1BadMoment}, and \ref{prop:PMoment}, we may bound the relevant integral by
\begin{align*}
\ll_{A,\bm{\b}} T\sum_{2 \leq n\leq L}  \exp&\left(\frac{2 \b_\ast}{c_{n-1}} - \frac{\log(1/c_n)}{20c_n}\right) \prod_{ p \leq T_{n-1}} \left(1 + \sum_{1\leq j,k \leq m} \frac{\b_j \b_k}{p^{i(\a_j-\a_k)}} + O_{\bm{\b}}\left(\frac{1}{p^2}\right)\right) + T(\log T)^{-A} \\
\ll_{A,\bm{\b}} T &\sum_{n\leq L}   \exp\left(e^{-n}(\log_2 T)^2 (2\b_\ast e + \tfrac{1}{20}n - \tfrac{1}{10} \log_3 T) \right)  \\
&\qquad \qquad \times \prod_{ p \leq T_{n-1}} \left(1 + \sum_{1\leq j,k \leq m} \frac{\b_j \b_k}{p^{i(\a_j-\a_k)}} \right)   + T(\log T)^{-A},
\end{align*}
where we have applied a union bound over all bad subsets $A$ of $[m]$.
Because the shifts satisfy $|\a_j - \a_k| \leq T$, the $T(\log T)^{-A}$ term is negligible by the  estimate 
\[
|\zeta(1 + 1/ \log T + i t)|  \gg \zeta(2 + 2/\log T)/\zeta(1 + 1/\log T) \gg  1/\log T
\]
for $|t|\leq 2T$. To simplify remaining term, note that because $T_L \leq T^{e^{-1000\b_\ast}}$ it follows that $L \leq 2\log_3 T - 1000\b_\ast$.
Therefore the latter term is
\begin{align*}
\ll_{A,\bm{\b}} T 
\sum_{n\leq L}   \exp\left(-4\b_\ast e^{-n} (\log_2 T)^2 \right)\prod_{ p \leq T_{n-1}} \left(1 + \sum_{1\leq j,k \leq m} \frac{\b_j \b_k}{p^{i(\a_j-\a_k)}} \right)  \\
\ll_{A,\bm{\b}} T (\log T)^{\b_1^2 + \cdots + \b_m^2} \prod_{1\leq j < k \leq m} |\zeta(1 + i(\a_j - \a_k) + 1/ \log T )|^{2\b_j \b_k} \\
\times \sum_{n \leq L} \exp\left(-4\b_\ast e^{-n} (\log_2 T)^2 \right) \prod_{ p \in (T_{n-1},T_L]} \left(1 + \sum_{1\leq j,k \leq m} \frac{\b_j \b_k}{p^{i(\a_j-\a_k)}} \right)^{-1} \\
\ll_{A,\bm{\b}} T (\log T)^{\b_1^2 + \cdots + \b_m^2} \prod_{1\leq j < k \leq m} |\zeta(1 + i(\a_j - \a_k) + 1/ \log T )|^{2\b_j \b_k} \\
\times \sum_{n \leq L} \exp\left(-4\b_\ast e^{-n} (\log_2 T)^2 + \b_\ast^2 (L-n)\right).
\end{align*}
Note we used Merten's estimate when passing to final line. 
By summing in reverse, one readily sees the sum over $n$ is convergent, and the claim now follows.

\end{proof}

\noindent
In view of (\ref{eqn:GoodBadPartition}), this completes the proof of Theorem \ref{thm:main}.

\end{document}